\documentclass[12pt]{amsart}
\usepackage{amsmath,amsthm,amssymb,mathtools,enumitem}
\numberwithin{equation}{section}
\newtheorem{thm}{Theorem}[section]
\newtheorem{prop}[thm]{Proposition}
\newtheorem{lem}[thm]{Lemma}
\newtheorem{cor}[thm]{Corollary}

\theoremstyle{definition}

\theoremstyle{remark}
\newtheorem{remark}[thm]{Remark}

\begin{document}
	
	\title[Target Avoidance for Manneville--Pomeau Maps]
	{Strong-Winning Target Avoidance for Manneville--Pomeau Maps}
	\author{Jason Duvall}
	\email{jasonrduvall@gmail.com}
	\thanks{Keywords: Schmidt's game, Manneville--Pomeau maps, nondense orbit, nonuniform hyperbolicity, Hausdorff dimension, exceptional sets}
	\thanks{Mathematics Subject Classification numbers: 37D25, 11K55}
	\begin{abstract}
		We prove that target-avoidance sets for Manneville--Pomeau maps are strong winning for Schmidt's game. More precisely, for the class of nonuniformly expanding interval maps considered here, there exists a single parameter $\alpha>0$ such that for every target $p\in[0,1]$, the set of points whose forward orbit does not accumulate on $p$ is $\alpha$-strong winning. The proof induces on the uniformly expanding region $[r_1,1]$. The resulting first-return map has infinitely many branches, so we approximate it by finite-branch expanding maps, apply a theorem of Hu--Li--Yu to those finite approximants, and then transfer the resulting strategies first to the induced map and then to the original Manneville--Pomeau map.
	\end{abstract}
	\maketitle
	
	\section{Introduction}
	
	\subsection{Background}
	
	Manneville--Pomeau maps are standard examples of nonuniformly expanding interval maps. They are expanding away from a neutral fixed point, while orbits passing near the neutral fixed point can spend long periods of time in a region of weak expansion. For this reason they provide a useful test case for determining which conclusions from uniformly expanding dynamics survive in the presence of intermittent behavior.
	
	In this paper we study target-avoidance sets for such maps. Given a point $p\in[0,1]$, define
	\begin{equation*}
		\mathcal E_f(p)
		\coloneqq
		\big\{x\in[0,1]:p\notin\overline{\{f^n x:n\geq0\}}\big\}.
	\end{equation*}
	Equivalently, $\mathcal E_f(p)$ is the set of points whose forward orbit misses some relative neighborhood of $p$. In many ergodic systems these sets are small from the viewpoint of invariant measure, but they are often large from the viewpoint of Hausdorff dimension and Schmidt games.
	
	Schmidt's game, introduced in \cite{Schmidt1966}, and later variants such as McMullen's strong and absolute games \cite{McMullen2010}, give robust ways to measure this kind of largeness. Strong-winning sets have full Hausdorff dimension and satisfy a countable-intersection property with a fixed winning parameter, and the strong winning (but not the winning property) is stable under quasisymmetric homeomorphisms. Related work on nondense orbits and Schmidt games includes \cite{Dani1988,BroderickFishmanKleinbock2011,Tseng2009,Farm2010,FarmPerssonSchmeling2010,ManceTseng2013,HuYu,HuLiYu2025}.
	
	For uniformly expanding interval maps, target-avoidance results are now well developed. Hu and Yu proved $1/2$-winning results for a class of expanding interval maps including the Gauss transformation and beta transformations \cite{HuYu}. More recently, Hu, Li, and Yu proved that if $T$ is a piecewise $C^{1+\delta}$ expanding interval map with finitely many branches, then for every target $\xi$ the exceptional set
	\begin{equation*}
		BAD_T(\xi)=\{x:\inf_{n\ge0}|T^n x-\xi|>0\}
	\end{equation*}
	is $1/2$-strong winning \cite{HuLiYu2025}. This finite-branch theorem is a key ingredient below.
	
	The nonuniformly expanding Manneville--Pomeau case is not a direct consequence of the finite-branch theory. The natural first-return map to the uniformly expanding region $[r_1,1]$ has infinite branches and unbounded geometry near the neutral fixed point, complicating the analysis and preventing direct application of \cite{HuYu} or \cite{HuLiYu2025}.
	
	\subsection{Our results}
	
	In \cite{DuvallMP}, the author proved that the exceptional set based at the neutral fixed point is strong winning. The purpose of the present paper is to extend this from the neutral fixed point to arbitrary targets, with one uniform winning parameter for all points in the unit interval.
	
	The proof proceeds as follows. We induce on the uniformly expanding region $[r_1,1]$, obtaining a countably branched full-return map $F$. For each $K$ we form a finite-branch expanding approximation $F_K$ by keeping the first $K$ return branches and replacing the remaining tail by a single affine branch. The theorem of Hu--Li--Yu applies to each $F_K$. We then show that the intersection of the corresponding finite-branch exceptional sets, together with the known exceptional set for the neutral endpoint of the induced system, is contained in the desired exceptional set for $F$. Finally, the inducing transfer argument from \cite{DuvallMP} carries strong winning from the induced system back to the original map.
	
	The main result is the following.
	
	\begin{thm}\label{thm:maintheorem}
		Let $f$ satisfy Conditions \ref{condfirst}--\ref{condlast}. There exists $\alpha\in\bigl(0,\frac12\bigr)$ such that for every $p\in[0,1]$, the exceptional set $\mathcal E_f(p)$ is $\alpha$-strong winning for Schmidt's game on $[0,1]$.
	\end{thm}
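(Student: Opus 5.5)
The strategy is the one outlined in the introduction: reduce to the induced map on $[r_1,1]$, approximate it by finite-branch maps, and then pull the result back to $f$.

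\textbf{Step 1 (induced map).} Let $F:[r_1,1]\to[r_1,1]$ be the first-return map of $f$. Its branches $J_n$ are the points with return time $n$. They accumulate only at the endpoint $r_1$ (the preimage side of the neutral fixed point), and on each branch $F$ is $C^{1+\delta}$ with uniformly bounded distortion and $|F'|\ge\lambda>1$. Given $p\in[0,1]$, we choose a finite set $P\subset[r_1,1]$ of targets for $F$ as follows. If $p\in[r_1,1]$, take $P=\{p\}$. If $p\in(0,r_1)$, take the finitely many points of $f^{-k}(p)\cap[r_1,1]$ visited before the first entry to $[0,r_1)$ near $p$. Add the endpoint $r_1$ if $p=0$ or if orbits near $p$ pass through the neutral region. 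The key claim is
\[
\bigcap_{q\in P}\mathcal E_F(q)\cap\mathcal E_F(r_1)\subset\mathcal E_f(p)\cap[r_1,1].
\]
The reason is that an $f$-orbit approaching $p$ must, via the excursions between returns, make $F$-orbits approach either some $q\in P$ or $r_1$. Here we use that an excursion of length $n$ starting in $J_n$ stays in a controlled region, and that long excursions correspond to returns close to $r_1$.

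\textbf{Step 2 (finite approximants).} For each $K$, let $F_K$ agree with $F$ on $J_1,\dots,J_K$ and be a single affine expanding branch on the tail $T_K=\bigcup_{n>K}J_n$. Then $F_K$ is piecewise $C^{1+\delta}$ expanding with finitely many branches. By \cite{HuLiYu2025}, $BAD_{F_K}(q)$ is $\tfrac12$-strong winning on $[r_1,1]$ for every $q$. By \cite{DuvallMP}, $\mathcal E_F(r_1)$ is strong winning; we reduce its parameter to a common $\alpha$, which is allowed since strong winning persists for smaller $\alpha$ in the strong game after the standard adjustment. We then show
\[
\mathcal E_F(r_1)\cap\bigcap_{K}BAD_{F_K}(q)\subset\mathcal E_F(q).
\]
To see this, take $x\in\mathcal E_F(r_1)$. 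Its $F$-orbit avoids some neighborhood $[r_1,r_1+\eta)$, hence avoids $T_K$ for $K=K(\eta)$. On such orbits $F^n x=F_K^n x$ for all $n$, so $x\in BAD_{F_K}(q)$ gives the conclusion. We then apply the countable intersection property for strong winning sets with a fixed parameter.

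\textbf{Step 3 (transfer to $f$).} The transfer lemma of \cite{DuvallMP} converts an $\alpha'$-strong winning subset of $[r_1,1]$ that is invariant under the relevant backward branches into an $\alpha$-strong winning subset of $[0,1]$. Concretely, $\mathcal E_f(p)$ is invariant under preimages, so it contains $\bigcup_k f^{-k}(\text{the set from Step 1})$. A player facing balls inside $(0,r_1)$ pushes forward by $f$ until reaching $[r_1,1]$, using bounded distortion of the pullbacks. The ball near $0$ is handled exactly as in \cite{DuvallMP}. The parameter $\alpha$ produced depends only on $f$, through the distortion constants and on the parameter for $\mathcal E_F(r_1)$, and not on $p$. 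This gives the uniformity claimed in Theorem~\ref{thm:maintheorem}.

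\textbf{Main obstacle.} The delicate point is Step 1: turning accumulation of the $f$-orbit on $p$ into accumulation of the induced orbit on a finite set. When $p$ lies in the neutral region $(0,r_1)$, every excursion of sufficiently long return time passes arbitrarily close to $p$, so the long-branch tail must be absorbed into the $r_1$ target. My first attempt is to check that if $x\in\mathcal E_F(r_1)$ then the return times along the orbit are bounded. The excursions then visit only finitely many points of each branch's orbit segment, so approach to $p$ forces approach of $F^n x$ to a finite preimage set of $p$, and these preimages are exactly the set $P$.
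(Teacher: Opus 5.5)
Your skeleton matches the paper's. Step 2 is exactly Theorem~\ref{thm:Fwinning}: finite approximants $F_K$, Hu--Li--Yu, and $\mathcal E_F(r_1)$ to confine orbits to finitely many branches. Step 3 is the distortion-based game transfer of Proposition~\ref{prop:gametransferFtof}. The difference is in Step 1, which you rightly flag as the delicate point. As you state it, the inclusion there is not correct, for three reasons.

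\textbf{(1) $P$ cannot be a fixed finite set.} For $p\in(0,r_1)$, the pullbacks that matter are the points $q_m\in(r_1,1)$, $m\ge1$, with $f^m(q_m)=p$ and $f^i(q_m)\in(0,r_1)$ for $1\le i\le m$. There is one for each excursion length, since $f^m$ maps $\{y\in[r_1,1]: f^iy\in[0,r_1),\ 1\le i\le m\}$ homeomorphically onto $[0,r_1)$, and $q_m\to r_1$. Membership $x\in\mathcal E_F(r_1)$ bounds the return times along the orbit of $x$ by some $M(x)$. So your argument only shows that the $F$-orbit accumulates on some $q_m$ with $m<M(x)$. Since $M(x)$ is unbounded on $\mathcal E_F(r_1)$, your argument actually proves the inclusion with the countable set $P=\{q_m\}_{m\ge1}$. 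It does not give a finite set independent of $x$. This is repairable precisely because Step 2 gives one parameter for all targets, so the countable intersection is still strong winning.

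\textbf{(2) $P=\{r_1\}$ fails for $p=r_1$.} The $f$-orbit can approach $r_1$ from the left, and $F$ never sees this near $r_1$. Let $a\in(r_1,1)$ satisfy $f(a)=r_1$, and suppose the $F$-itinerary of $x$ uses only branches $1,2$ but contains blocks $2\,1^k$ with $k\to\infty$. Then $x\in\mathcal E_F(r_1)$, yet $F^jx\to a^-$ along a subsequence, so the $f$-orbit of $x$ accumulates on $r_1$. You need $1$ as an additional target. This is the discontinuity phenomenon the paper isolates in Lemma~\ref{lem:preimagesofzero}.

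\textbf{(3) $\mathcal E_f(p)$ is not backward invariant.} In the transfer, a pullback $\omega\in(r_{N+1},r_N)$ with $f^i\omega=p$ for some $i<N$ is not in $\mathcal E_f(p)$, even though $f^N\omega$ is. This happens only when $f^N\omega=f^{\tau(p)}(p)$, so Alicia must also avoid that single point.

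The paper avoids all three issues by pushing $p$ forward instead of pulling it back. It uses the single target $q=f^{\tau(p)}(p)$ together with continuity of $f^{\tau(p)}$ near $p$, which holds for $p\notin\bigcup_n f^{-n}(0)$. Any visit of the $f$-orbit of $\omega$ near $p$ then yields, $\tau(p)$ steps later, a visit to $[r_1,1]$ near $q$. Such a visit is automatically an $F$-iterate of $f^N\omega$. This needs no bound on excursion lengths, uses one target, and handles the initial segment $\omega,\dots,f^{N-1}\omega$ for free. The price is a separate treatment of preimages of $0$, via $\mathcal E_f(0)\cap\mathcal E_f(1)\subset\mathcal E_f(p)$.

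Repaired as above (countable $P$, the extra target $1$ at $p=r_1$, excluding $f^{\tau(p)}(p)$), your backward route also works with a uniform parameter. It treats preimages of $0$ by the same mechanism rather than a separate lemma. However, it needs $\mathcal E_F(r_1)$ inside the reduction step and countably many targets.
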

	
	Because countable intersections of $\alpha$-strong winning sets are again $\alpha$-strong winning, we immediately obtain the following consequence.
	
	\begin{cor}
		Let $P\subset[0,1]$ be countable. Then the set of points $x\in[0,1]$ such that
		\begin{equation*}
			P\cap\overline{\{f^n x:n\geq0\}}=\emptyset
		\end{equation*}
		is $\alpha$-strong winning.
	\end{cor}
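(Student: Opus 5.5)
The statement to prove is the Corollary. It follows from Theorem~\ref{thm:maintheorem} together with the countable intersection property of $\alpha$-strong winning sets. The key point is that Theorem~\ref{thm:maintheorem} provides a \emph{single} parameter $\alpha\in(0,\frac12)$ that works simultaneously for every target $p\in[0,1]$. I will use exactly this $\alpha$ and never let it depend on the target.

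First I identify the set in question as an intersection. For $x\in[0,1]$ we have $P\cap\overline{\{f^nx:n\ge0\}}=\emptyset$ if and only if, for every $p\in P$, $p\notin\overline{\{f^nx:n\ge0\}}$. This holds exactly when $x\in\mathcal E_f(p)$ for every $p\in P$. Hence the set in the Corollary equals
\begin{equation*}
\bigcap_{p\in P}\mathcal E_f(p).
\end{equation*}
If $P=\emptyset$ this is all of $[0,1]$, which is trivially $\alpha$-strong winning, so assume $P$ is nonempty. Then $P$ is countable and can be enumerated as $p_1,p_2,\dots$ (a finite list if $P$ is finite).

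Next, by Theorem~\ref{thm:maintheorem} each $\mathcal E_f(p_i)$ is $\alpha$-strong winning with the same $\alpha$. I then invoke the standard fact, recalled in the introduction, that a countable intersection of $\alpha$-strong winning subsets of $[0,1]$ is again $\alpha$-strong winning.

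For completeness, the argument behind that fact runs as follows. Alice partitions the set of turns into countably many infinite arithmetic-type subsequences, one for each $i$. On the $i$-th subsequence she plays her winning strategy for $\mathcal E_f(p_i)$. In the strong game Alice may shrink by factors at least $\alpha$ and Bob by factors at least $\beta$. The moves between consecutive turns of one subsequence can therefore be regarded as a single combined Bob move, since the composite contraction is still admissible in the strong game. This is precisely why the strong game, rather than the classical game, is stable under this interleaving with the same parameter.

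The only point needing care is uniformity of $\alpha$ over $p$. That uniformity is exactly what Theorem~\ref{thm:maintheorem} asserts, so there is no real obstacle.
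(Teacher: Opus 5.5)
Your proof is correct and follows the paper's argument: the set is $\bigcap_{p\in P}\mathcal E_f(p)$, each term is $\alpha$-strong winning with the single $\alpha$ of Theorem~\ref{thm:maintheorem}, and $\alpha$-strong winning sets are closed under countable intersections. Two small corrections to your optional sketch of that standard fact: Schmidt's classical $\alpha$-winning sets are also closed under countable intersections, so interleaving is not what distinguishes the strong game (quasisymmetric invariance is); and since lengths in the strong game need not tend to $0$, you should reserve one extra arithmetic subsequence of turns on which Alice plays $\lvert A_k\rvert=\alpha\lvert B_k\rvert$, which makes $\bigcap_k B_k$ a single point and hence a point lying in every $\mathcal E_f(p_i)$.
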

	
	In particular, the set of points whose orbit closures avoid a prescribed countable set, such as a countable collection of periodic orbits or $\mathbb Q\cap[0,1]$, is strong winning.
	
	\section{Preliminaries}\label{sec:preliminaries}
	
	For $I \subset \mathbb{R}$ we denote the left and right endpoints $I$ by $\partial^\ell I, \partial^r I$, respectively.
	
	\subsection{The class of maps under consideration}
	
	Throughout this paper we assume that $f \colon [0,1] \to [0,1]$ is a two-branched interval map and that there exists $r_1 \in (0,1)$ such that the following conditions hold (using one-sided derivatives as needed):
	\begin{enumerate}
		\item \label{condfirst} $f((0,r_1)) = f((r_1,1)) = (0,1)$;
		\item \label{cond2} $f$ is $C^1$ on $[0,r_1)$ and $C^2$ on $(0,r_1) \cup (r_1,1]$;
		\item \label{cond3} $f(0) = f(r_1) = 0$, $f(1)=1$, $f'(0) = 1$, and $\lvert f' \rvert > 1$ on $(0,1]$;
		\item \label{cond4} $\sup_{x \in [0,1]} \, \lvert f'(x) \rvert < \infty$ and 
		$\sup_{x \in (r_1,1]} \, \lvert f''(x) \rvert < \infty$;
		\item \label{condlast} there exist $C \geq 1$ and $\gamma > 0$ such that for all $x \in (0,r_1)$,
		\begin{equation*}
			C^{-1} \leq \frac{f''(x)}{x^{\gamma-1}} \leq C.
		\end{equation*}
	\end{enumerate}
	
	\begin{remark}
		The implicit assumption in Condition \ref{cond3} that $\lvert f'_+(r_1)\rvert>1$ is used only to ensure the required expansion and distortion properties of the first return map to $[r_1,1]$. Thus the results below remain valid if one allows $\lvert f'_+(r_1)\rvert=1$, provided that the induced first return map is uniformly expanding and satisfies the bounded distortion estimates used in the inducing and transfer arguments below.
	\end{remark}
	
	\begin{remark}[Endpoint conventions]\label{rem:endpointrelations}
		We use the interval representation of Manneville-Pomeau maps. The particular values assigned at branch endpoints $f(0),f(r_1),f(1)$ are arbitrary conventions. Changing them affects only endpoint and preimage orbits, a countable set treated separately below, and does not change our strong-winning conclusion.
	\end{remark}
	
	The standard example from this class is the Manneville--Pomeau map
	\begin{equation*}
		f(x) = \begin{cases}
			x+x^{1+\gamma} & \text{if } \, 0 \leq x < r_1 \\
			x+x^{1+\gamma} - 1 & \text{if } \, r_1 \leq x \leq 1,
		\end{cases}
	\end{equation*}
	where $r_1$ is the unique solution of $x+x^{1+\gamma} = 1$.

	\subsection{First return to $[r_1,1]$ and the induced map}
	
	For any $x \in [0,1]$ define the first return time
	\begin{equation*}
		\tau (x) := \min \{ n \geq 1 \colon f^n (x) \in [ r_1,1 ] \}.
	\end{equation*}
	The induced first return map $F \colon [r_1,1] \to [r_1,1]$ is then given by
	\begin{equation*}
		F(x) := f^{\tau (x)} (x).
	\end{equation*}
	Condition \ref{cond3} above implies that $F$ is uniformly expanding. Define
	\begin{equation*}
		\lambda \coloneqq \inf \lvert F' \rvert > 1,
	\end{equation*}
	using one-sided derivatives as needed.

	\subsection{Schmidt's game and the strong variation}\label{sec:schmidt}
	
	We use the strong variation of Schmidt's game introduced by McMullen in \cite{McMullen2010}, specialized to our setting. Let $I \subset \mathbb{R}$ be a compact interval and fix parameters $\alpha,\beta \in (0,1)$ and a set $S \subset I$. Bob chooses any compact interval $B_1 \subset I$. Thereafter the players alternately choose nested compact intervals $B_1 \supset A_1 \supset B_2 \supset A_2 \supset \dots$ satisfying $\lvert A_k \rvert \geq \alpha \lvert B_k \rvert$ and $\lvert B_{k+1} \rvert \geq \beta \lvert A_k \rvert$ for $k \in \mathbb{N}$. Alice wins if
	\begin{equation*}
		\bigcap_{k=1}^{\infty} B_k \cap S\neq\emptyset.
	\end{equation*}
	
	If Alice is able to win for a fixed $\alpha$ and every $\beta \in (0,1)$, we say that $S$ is $\alpha$-strong winning. If this holds for some $\alpha>0$, we say that $S$ is strong winning. Strong-winning sets are dense, uncountable, have full Hausdorff dimension, and countable intersections of $\alpha$-strong winning sets are again $\alpha$-strong winning. Cocountable subsets of $\alpha$-strong winning sets are $\alpha$-strong winning, and the strong winning property is stable under quasisymmetric homeomorphisms. See \cite{Schmidt1966,McMullen2010,DuvallMP} and the references therein for proofs and background.

	\subsection{A finite-branch result}
	
	We will use the following finite-branch theorem of Hu--Li--Yu.
	
	\begin{thm}[Hu--Li--Yu \cite{HuLiYu2025}]\label{thm:HLY}
		Let $T\colon X\to X$ be a piecewise $C^{1+\delta}$ expanding interval map with finitely many branches on a compact interval $X$. Then, for every $\xi\in X$, the exceptional set \begin{equation*}
			BAD_T(\xi) = \{x\in X:\inf_{n\geq0}|T^n x-\xi|>0\}
		\end{equation*}
		is $1/2$-strong winning.
	\end{thm}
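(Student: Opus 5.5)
The plan is to give Alice an explicit strategy with $\alpha=\tfrac12$. It rests on the usual scale-by-scale bookkeeping for expanding maps.

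\textbf{Setup.} Let $\{X_1,\dots,X_N\}$ be the branch intervals of $T$. Let $\Lambda=\inf|T'|>1$ and $M=\sup|T'|<\infty$. The $C^{1+\delta}$ hypothesis gives the standard bounded distortion estimate: there is $D\ge1$ such that for every $n$-cylinder $J$ (a maximal interval on which $T^n$ is a $C^{1+\delta}$ diffeomorphism onto its image),
\[
D^{-1}\le\frac{|(T^n)'(x)|}{|(T^n)'(y)|}\le D\qquad\text{for all }x,y\in J.
\]
Consequently, for any interval $U\subset T^n(J)$, the pullback $J\cap T^{-n}U$ is an interval of length comparable to $|U|/|(T^n)'|_J|$.

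Fix $\beta\in(0,1)$ and Bob's first interval $B_1$. Alice chooses a small radius $\varepsilon=\varepsilon(\beta,D,M,N)>0$. Her goal is to force every point of $\bigcap_k B_k$, apart from possibly finitely many initial iterates, to have orbit outside $U_\varepsilon=(\xi-\varepsilon,\xi+\varepsilon)$. This suffices: the finitely many initial iterates $T^nx$ can be kept off $\xi$ itself, because the set of preimages of $\xi$ together with the discontinuities is countable and Alice can dodge countably many points. Once the orbit avoids both $U_\varepsilon$ and $\xi$, we get $\inf_n|T^nx-\xi|>0$.

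\textbf{Step 1: assign dangerous intervals to rounds.} For each $n$, the set $T^{-n}U_\varepsilon$ is a union of intervals $J\cap T^{-n}U_\varepsilon$, one for each $n$-cylinder $J$. By bounded distortion, such a dangerous interval has length at most $C\varepsilon\,|J|$, with $C$ depending only on $D$ and on the lengths of the branches. Assign the pair $(n,J)$ to the round $k$ at which $|B_k|$ first drops below a fixed constant multiple of $|J|$. Since cylinder lengths decrease geometrically in $n$ at rates between $M^{-1}$ and $\Lambda^{-1}$, and $|B_{k+1}|\ge(\beta/2)|B_k|$, the following holds when $\varepsilon$ is small: at each round only boundedly many dangerous intervals meet $B_k$, and each has length at most $c\varepsilon|B_k|$. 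The key point is that an orbit segment cannot return to $U_\varepsilon$ within a short time unless $\xi$ is (pre)periodic, and in that case consecutive returns are nested inside one pullback.

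\textbf{Step 2: the $\tfrac12$-move.} Alice needs to remove all dangerous intervals assigned to the current round while keeping $|A_k|\ge\tfrac12|B_k|$.

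\begin{itemize}
\item If there is one dangerous interval $E$, of length at most $c\varepsilon|B_k|$, Alice takes $A_k$ to be the half of $B_k$ (left or right) that does not contain the center of $E$. Then $A_k$ meets at most half of $E$.
\item The leftover piece of $E$ now sits at an endpoint of $A_k$. Since $|A_k|\ge|B_k|/2$ while the leftover has length at most $c\varepsilon|B_k|$, in each later round Alice places the leftover at the endpoint of her new half that lies opposite to it. Because the leftover shrinks relative to nothing but the current interval shrinks only by the factor $\beta/2$, after $O(\log(1/\varepsilon)/\log(2/\beta))$ rounds it is fully excluded.
\item If several dangerous intervals are assigned to the same round, Alice uses the freedom to play $A_k=B_k$ and staggers them over successive rounds. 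This is possible because their number per round is bounded in terms of $\beta$ and the map only.
\end{itemize}

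Choosing $\varepsilon$ small relative to $\beta^{\text{(bounded power)}}$ makes this scheduling consistent with the arrival of new dangerous intervals at later rounds.

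\textbf{Main obstacle.} The hardest step is the counting and scheduling in Step 1 at the exact parameter $\alpha=\tfrac12$, where Alice has no slack to dodge a dangerous interval sitting near the midpoint. There are two concrete problems:
\begin{itemize}
\item several preimages of $U_\varepsilon$ may cluster inside one $B_k$;
\item $\xi$ may be a periodic point or an endpoint of a branch, so that $T^{-n}U_\varepsilon$ near $\xi$ contains nested families of intervals at every scale.
\end{itemize}
To handle these I would first try to show that, for a cylinder $J$ at the scale of $B_k$, all $m$ with $T^m(J)\cap U_\varepsilon\neq\emptyset$ and $m$ below the next scale give pullbacks contained in a single interval of length $O(\varepsilon)|B_k|$. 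This follows from expansion together with the fact that the return time of $U_\varepsilon$ to itself grows like $\log(1/\varepsilon)$ unless $\xi$ is periodic, in which case the pullbacks are nested. Under that containment Alice faces effectively one dangerous interval per round, and Step 2 applies.
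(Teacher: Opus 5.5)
The paper gives no proof of this theorem. It is imported from Hu--Li--Yu and used only for the full-branch maps $F_K$, so your sketch has to stand on its own, and it does not. The step you label the ``main obstacle'' is the entire content of the theorem, and the version of it you propose is false. Take $Tx=2x\bmod 1$ and a dyadic $n$-cylinder $J$ at the scale of $B_k$, so $T^nJ$ is all of $[0,1)$. For $m=n+j$, the set $J\cap T^{-m}U_\varepsilon$ consists of $2^j$ intervals of length about $2\varepsilon\cdot 2^{-m}$, with centres spaced $2^{-m}$ apart across all of $J$. Over a window $0\le j\lesssim\log_2(2/\beta)$ this gives on the order of $1/\beta$ threats spread evenly through $J$, not one interval of length $O(\varepsilon)|B_k|$. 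Any lower bound on the return time of $U_\varepsilon$ to itself constrains a single orbit revisiting $U_\varepsilon$. It says nothing about different points of $J$ entering $U_\varepsilon$ at different times.

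What can work is to attach the window of times to $B_k$ itself. Arrange that for each $m$ in the window, $T^m(B_k)$ is a single interval of length between $c\beta\eta$ and $\eta$, with $\eta\ll 1$ and $\varepsilon\ll\beta\eta$. Then this one short blob meets $U_\varepsilon$ at most once per window, unless $\xi$ returns within $O(\varepsilon+\eta)$ of itself, possibly one-sidedly, in at most window-length steps. For non-periodic $\xi$ this is excluded by shrinking $\varepsilon$ and $\eta$; for periodic $\xi$ the pullbacks are nested. Even so, you must still treat rounds in which $B_k$ contains a cylinder boundary of a level in the window. There $T^m(B_k)$ splits into pieces sitting at images of discontinuities and endpoints. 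This is exactly where the cases you list ($\xi$ a branch endpoint, or in the orbit of a discontinuity) enter, and you leave them open.

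Your fallback of staggering a bounded number $L(\beta)$ of threats per round does not rescue the argument at $\alpha=\frac12$.
\begin{itemize}
\item No interval of half the length of $B_k$ avoids two small threats at relative positions $\frac14$ and $\frac34$.
\item Every such interval contains the midpoint, so a threat there costs two rounds.
\item Playing $A_k=B_k$ clears nothing.
\item A postponed threat grows relative to Bob's intervals by a factor of up to $2/\beta$ per round. After a few postponements it is no longer small and cannot be removed by halving.
\end{itemize}
So a per-round bound is useless unless it is essentially one threat per round, which is the missing lemma.

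There is also a separate gap. The theorem covers arbitrary finitely-branched piecewise expanding maps, not only Markov ones. Your bound $|J\cap T^{-n}U_\varepsilon|\le C\varepsilon|J|$ with $C$ independent of $n$, and your uniform lower bound on ratios of consecutive cylinder lengths, both need $\inf_{n,J}|T^nJ|>0$. This fails, for example, for $x\mapsto bx\bmod 1$ with $b\notin\mathbb N$ and $\inf_k T^k1=0$, where $n$-cylinders can have images $[0,T^k1)$ of arbitrarily small length. Then $|J\cap T^{-n}U_\varepsilon|/|J|$ is comparable to $|T^nJ\cap U_\varepsilon|/|T^nJ|$, which need not be small (take $\xi=0$). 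Also, a child cylinder can be an arbitrarily thin slice of its parent. This non-Markov issue does not affect the paper's use, since each $F_K$ is full-branched, but the first two gaps are present even for full-branch maps.

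Your reduction to eventual avoidance of $U_\varepsilon$ plus a co-countable set is fine. You should, however, invoke the countable-intersection property rather than dodging the countably many points by hand inside the same $\frac12$-strategy.
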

	
	Theorem \ref{thm:HLY} will be applied to the finite-branch approximating maps $F_K$ constructed in \S\ref{sec:transferfromF}. Since the induced map $F$ has infinite branches, the theorem cannot be applied directly to $F$.

	\section{Exceptional sets of the induced map}\label{sec:transferfromF}
	
	We now prove that the induced first-return map $F\colon [r_1,1]\to[r_1,1]$ has strong-winning exceptional sets with uniform winning parameter independent of the target. Our approach is to approximate $F$ by finite-branch expanding maps and then use Theorem \ref{thm:HLY}.
	
	Recall that $r_1$ is the branch boundary point of the map $f$. Define $r_0\coloneqq1$, and for $i\ge2$ define
	\begin{equation*}
		r_i\coloneqq f^{-1}\vert_{[0,r_1]}(r_{i-1}).
	\end{equation*}
	Number the branches $J_i^F$ of the induced map $F$ from right to left and write
	\begin{equation*}
		J_i^F=[p_i,p_{i-1}],
	\end{equation*}
	so that $p_0=1$ and $p_i\searrow r_1$.
	
	For each $K\ge1$, define $F_K\colon [r_1,1]\to [r_1,1]$ by
	\begin{equation*}
		F_K(x)=
		\begin{cases}
			F(x), & x>p_K,\\[3pt]
			r_1+\dfrac{1-r_1}{p_K-r_1}(x-r_1), & x\le p_K.
		\end{cases}
	\end{equation*}
	Thus $F_K$ agrees with $F$ on the first $K$ branches and replaces the infinite-branch tail $[r_1,p_K]$ with a single affine branch.
	
	Since $F$ is uniformly expanding and the slope of $F_K$ on branch $K+1$ exceeds 1, $F_K$ is uniformly expanding. And $F_K$ is piecewise $C^2$ since $f$ and hence $F$ are. Therefore $F_K$ satisfies the requirements of Theorem \ref{thm:HLY}, proving the following result.
	
	\begin{lem}\label{lem:FKHLY}
		For every integer $K\geq 1$ and $q\in[r_1,1]$, the exceptional set $\mathcal E_{F_K}(q)$ is $1/2$-strong winning.
	\end{lem}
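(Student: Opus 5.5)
The plan is to verify that each $F_K$ satisfies the hypotheses of Theorem \ref{thm:HLY} on $X=[r_1,1]$, and then to check that the set $\mathcal E_{F_K}(q)$ in the lemma coincides with $BAD_{F_K}(q)$ up to a countable set. Cocountable subsets of $\alpha$-strong winning sets remain $\alpha$-strong winning, so this suffices.

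\textbf{Step 1: hypotheses of Theorem \ref{thm:HLY}.}
\begin{itemize}
\item \emph{Finitely many branches.} $F_K$ has exactly $K+1$ branches: the intervals $J_i^F=[p_i,p_{i-1}]$ for $1\le i\le K$, together with the affine branch $[r_1,p_K]$.
\item \emph{Regularity on the first $K$ branches.} For $i\le K$ we have $F|_{J_i^F}=f^{i}$ on the interior (return time $i$). Each intermediate iterate lands in $(0,r_1)$, bounded away from $0$, where $f$ is $C^2$ by Condition \ref{cond2}. The iterate starting in $(r_1,1]$ is also $C^2$. Hence $F|_{J_i^F}$ is $C^2$ up to the endpoints, using one-sided derivatives and the bounds of Condition \ref{cond4}.
\item \emph{Regularity and expansion on the tail.} The affine branch is trivially $C^2$. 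Its slope is $(1-r_1)/(p_K-r_1)>1$ because $p_K<1$.
\item \emph{Uniform expansion.} On the first $K$ branches, $|F_K'|\ge\lambda>1$. So $\inf|F_K'|\ge\min\{\lambda,(1-r_1)/(p_K-r_1)\}>1$.
\item \emph{Full branches.} Each branch of $F$ maps onto $[r_1,1]$, and the affine branch does too. So $F_K\colon[r_1,1]\to[r_1,1]$ is a well-defined piecewise $C^{1+\delta}$ expanding map with any $\delta\le1$.

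Theorem \ref{thm:HLY} then gives that $BAD_{F_K}(q)$ is $1/2$-strong winning for every $q\in[r_1,1]$.
\end{itemize}

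\textbf{Step 2: comparing the two exceptional sets.} We have $x\in\mathcal E_{F_K}(q)$ iff $q\notin\overline{\{F_K^n x\}}$. This holds iff $\inf_n|F_K^nx-q|>0$, since the orbit closure misses $q$ exactly when some neighborhood of $q$ is avoided. Thus $\mathcal E_{F_K}(q)=BAD_{F_K}(q)$ on the nose. The only ambiguity is at branch endpoints $p_i$, where the value of $F_K$ depends on convention (cf.\ Remark \ref{rem:endpointrelations}). Points whose orbit hits such an endpoint form a countable set, namely the preimages of finitely many points under a finite-branch map. Discarding them changes nothing, by the cocountable-subset property.

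\textbf{Main obstacle.} I expect the only delicate point to be regularity of $F$ near the endpoints of the branches $J_i^F$ for $i\le K$, i.e.\ that the composition $f^i$ is $C^{1+\delta}$ on the closed branch. I would handle it by noting two facts: the intermediate points $f^j(J_i^F)$, $1\le j<i$, lie in $[r_{i-j+1},r_{i-j}]\subset(0,r_1]$, which is a compact set avoiding $0$; and Condition \ref{condlast} bounds $f''$ there. This gives Lipschitz derivatives. Everything else is a direct application of Theorem \ref{thm:HLY}.
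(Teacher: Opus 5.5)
Your proposal is correct and follows the paper's route: you check that $F_K$ is a finite-branch, uniformly expanding, piecewise $C^{1+\delta}$ map, with expansion $\lambda$ on the first $K$ branches and slope $(1-r_1)/(p_K-r_1)>1$ on the affine tail, and then apply Theorem \ref{thm:HLY}. Your extra details are sound and make the paper's one-line claim that $F_K$ is piecewise $C^2$ slightly more careful: near $r_1$ the left branch only has bounded $f''$ (from Condition \ref{condlast}), which still gives Lipschitz derivatives and $C^{1+1}$ regularity, and your identification $\mathcal E_{F_K}(q)=BAD_{F_K}(q)$ needs adjustment only on a countable set of endpoint preimages.
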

	
	Now we show that winning strategies with the finite-branch maps $F_K$ transfer cleanly to the infinite-branch $F$.	
	
	\begin{thm}\label{thm:Fwinning}
		There exists a common winning parameter $\alpha_F\in\bigl(0,\frac12\bigr)$ such that for every $q\in[r_1,1]$, the exceptional set $\mathcal E_F(q)$ is $\alpha_F$-strong winning.
	\end{thm}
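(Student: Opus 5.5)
The plan is to show that $\mathcal E_F(q)$ contains, up to a countable set, a countable intersection of sets that are already known to be strong winning with a fixed parameter. Those sets are the finite-branch exceptional sets $\mathcal E_{F_K}(q)$ together with the exceptional set $\mathcal E_F(r_1)$ of the neutral endpoint. Two closure properties recorded in \S\ref{sec:schmidt} then finish the job: countable intersections of $\alpha$-strong winning sets are $\alpha$-strong winning, and cocountable subsets of $\alpha$-strong winning sets are again $\alpha$-strong winning.

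\textbf{Step 1 (the neutral endpoint).} I take as input the result of \cite{DuvallMP} for the induced system. The orbit of $r_1$ under $f$ is the neutral fixed point $0$ after one step, so $r_1$ is the point of $[r_1,1]$ corresponding to the neutral fixed point. The input I need is that $\mathcal E_F(r_1)$ is $\alpha_0$-strong winning on $[r_1,1]$ for some $\alpha_0\in(0,1)$. I set
\begin{equation*}
\alpha_F \coloneqq \min\{\alpha_0,\tfrac14\}\in\bigl(0,\tfrac12\bigr).
\end{equation*}
Strong winning is monotone in the parameter: if Alice wins with parameter $\alpha$ and $\alpha'\le\alpha$, she keeps her $\alpha$-strategy, since $|A_k|\ge\alpha|B_k|\ge\alpha'|B_k|$ and Bob's constraint is unchanged. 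Hence $\mathcal E_F(r_1)$ and every $\mathcal E_{F_K}(q)$ from Lemma~\ref{lem:FKHLY} are $\alpha_F$-strong winning.

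\textbf{Step 2 (the key inclusion).} For $q\in[r_1,1]$ I will show
\begin{equation*}
\mathcal E_F(r_1)\cap\bigcap_{K\ge1}\mathcal E_{F_K}(q)\subset \mathcal E_F(q)\cup N,
\end{equation*}
where $N$ is the countable set of points whose $F$-orbit hits a branch endpoint $p_i$ or $r_1$. Take $x$ in the left side, with $x\notin N$. Since $x\in\mathcal E_F(r_1)$, there is $\varepsilon>0$ such that $F^nx\ge r_1+\varepsilon$ for all $n\ge0$. Because $p_K\searrow r_1$, I can choose $K=K(x)$ with $p_K<r_1+\varepsilon$. Then the $F$-orbit of $x$ never enters the tail $[r_1,p_K]$, where $F$ and $F_K$ differ. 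By induction on $n$ this gives $F^nx=F_K^nx$ for all $n$. Since $x\in\mathcal E_{F_K}(q)$, the orbit stays a positive distance from $q$, so $x\in\mathcal E_F(q)$.

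The essential point is that $K$ depends on $x$. This is why the intersection over all $K$ is needed rather than a single $F_K$. It costs nothing, because the winning parameter $\alpha_F$ is uniform in $K$.

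\textbf{Step 3 (conclusion and main obstacle).} The left side of the inclusion in Step 2 is a countable intersection of $\alpha_F$-strong winning sets, so it is $\alpha_F$-strong winning. Removing the countable set $N$ preserves this, and $\mathcal E_F(q)$ contains the result, so $\mathcal E_F(q)$ is $\alpha_F$-strong winning. The case $q=r_1$ is covered directly by Step 1.

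The main obstacle I expect is Step 1: making sure the neutral-endpoint theorem of \cite{DuvallMP} is genuinely available for $F$ on $[r_1,1]$, with a fixed parameter and on the same interval. If \cite{DuvallMP} only gives it for $f$ on $[0,1]$, I would first transport it. Orbits of $f$ avoiding a neighbourhood of $0$ correspond to $F$-orbits with bounded return times, that is, avoiding $[r_1,p_K]$ for some $K$. I would then restrict Alice's strategy to games played inside $[r_1,1]$, checking that the parameter changes only by a fixed factor. A secondary check is the endpoint bookkeeping in Remark~\ref{rem:endpointrelations}, which I expect is absorbed into $N$.
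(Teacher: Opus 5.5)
Your proposal is correct and follows essentially the same route as the paper. You intersect $\mathcal E_F(r_1)$ (from \cite{DuvallMP}) with all the $\mathcal E_{F_K}(q)$, and use that an orbit bounded away from $r_1$ lies beyond some $p_K$, with $K$ depending on the point, where $F_K=F$. Removing the countable set $N$ is harmless but unnecessary: $F^n x\ge r_1+\varepsilon>p_K$ already places every orbit point in the region $x>p_K$, where $F_K$ and $F$ agree by definition.
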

	
	\begin{proof}
		Theorem 2.1 of \cite{DuvallMP} states that $\mathcal E_F(r_1)$ is $\alpha_1$-strong winning for some $\alpha_1\in \bigl(0,\frac12\bigr)$. Now fix $q\in(r_1,1]$.
		
		By Lemma \ref{lem:FKHLY}, for every $K\ge1$, the set $\mathcal E_{F_K}(q)$ is $1/2$-strong winning and hence $\alpha_1$-strong winning. Therefore
		\begin{equation*}
			S\coloneqq \mathcal E_F(r_1)\cap\bigcap_{K=1}^\infty \mathcal E_{F_K}(q)
		\end{equation*}
		is $\alpha_1$-strong winning.
		
		We claim that $S\subset \mathcal E_F(q)$. Let $x\in S$. Since $x\in\mathcal E_F(r_1)$, the $F$-orbit of $x$ misses a relative neighborhood of $r_1$ in $[r_1,1]$. Hence there exists $N\ge1$ such that every point in the $F$-orbit of $x$ lies in the union of the first $N$ branches of $F$. On this orbit, $F_N$ and $F$ agree. Since $x\in\mathcal E_{F_N}(q)$, the $F_N$-orbit of $x$ misses a relative neighborhood of $q$. The same is therefore true for the $F$-orbit of $x$, so $x\in\mathcal E_F(q)$.
		
		Since $S$ is $\alpha_1$-strong winning, so too is its superset $\mathcal E_F(q)$.
	\end{proof}
	
	\section{Exceptional sets of the original map}
	
	We now transfer the strong-winning result for the induced map \(F\) to the original map \(f\). We will need the following distortion estimate, which is Lemma 5 of \S6.2 in \cite{Young1999}.
	
	\begin{prop}[A distortion estimate for $f^n$] \label{prop:youngdist}
		There exists a constant $C_1 > 1$ such that for all integers $0 \leq m \leq n$, and for all points $x,y \in (r_{n+1}, r_n)$,
		\begin{equation*}
			\bigg\lvert \log \frac{( f^m )'x}{( f^m )'y} \bigg\rvert \leq \frac{C_1}{r_{n-m} - r_{n-m+1}} \big\lvert f^m x - f^m y \big\rvert.
		\end{equation*}
	\end{prop}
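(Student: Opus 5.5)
The plan is to prove the estimate directly from Condition \ref{condlast}. The main inputs are the standard asymptotics of the points $r_k$ and a two-pass bounded distortion argument; I expect the second pass, controlling how far apart the intermediate iterates are, to be the main obstacle. Write $I_k\coloneqq(r_{k+1},r_k)$ for $k\ge1$. Then $f$ maps $I_{k}$ diffeomorphically onto $I_{k-1}$. So for $x,y\in I_n$ and $0\le j\le m\le n$ the iterates $f^jx,f^jy$ lie in $I_{n-j}$. For $j\le m-1$ we have $n-j\ge 1$, so all these intervals sit inside $(0,r_1)$, where Condition \ref{condlast} applies. By the chain rule,
\begin{equation*}
\log\frac{(f^m)'x}{(f^m)'y}=\sum_{j=0}^{m-1}\bigl(\log f'(f^jx)-\log f'(f^jy)\bigr).
\end{equation*}
Since $f'\ge1$ on $(0,r_1)$, the mean value theorem bounds each term by $\sup_{I_{n-j}}|f''|\cdot|f^jx-f^jy|$.

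\emph{Step 1 (asymptotics).} Condition \ref{condlast} together with $f'(0)=1$ gives $f(x)=x+c(x)x^{1+\gamma}$ near $0$, with $c(x)$ bounded above and below. The standard comparison with the flow $\dot x=x^{1+\gamma}$ then yields $r_k\asymp k^{-1/\gamma}$ and $|I_k|=r_k-r_{k+1}\asymp r_k^{1+\gamma}\asymp k^{-1-1/\gamma}$. Since $x\ge r_{k+1}\asymp r_k$ on $I_k$, we get $\sup_{I_k}|f''|\le C' r_k^{\gamma-1}$ whether $\gamma<1$ or $\gamma\ge1$. Consequently
\begin{equation*}
\sup_{I_k}|f''|\cdot|I_k|\le C'' r_k^{2\gamma}\asymp k^{-2},
\end{equation*}
which is summable in $k$.

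\emph{Step 2 (a priori bounded distortion).} Apply the chain-rule sum to $f^{m-j}\colon I_{n-j}\to I_{n-m}$, using only the crude bound $|f^ix-f^iy|\le|I_{n-i}|$. The sum is then dominated by $\sum_k \sup_{I_k}|f''|\,|I_k|<\infty$. Hence $f^{m-j}|_{I_{n-j}}$ has distortion bounded by a constant $D$ independent of $n,m,j$. By the mean value theorem this gives
\begin{equation*}
|f^jx-f^jy|\le D\,\frac{|I_{n-j}|}{|I_{n-m}|}\,|f^mx-f^my|.
\end{equation*}

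\emph{Step 3 (the estimate).} Insert the bound from Step 2 into the original sum:
\begin{equation*}
\Bigl|\log\frac{(f^m)'x}{(f^m)'y}\Bigr|\le \frac{D\,|f^mx-f^my|}{|I_{n-m}|}\sum_{j=0}^{m-1}\sup_{I_{n-j}}|f''|\,|I_{n-j}|\le \frac{C_1}{r_{n-m}-r_{n-m+1}}|f^mx-f^my|,
\end{equation*}
again by the summability from Step 1. (The denominator should be read as $|I_{n-m}|$; the index convention in the displayed statement differs from this by a bounded factor, since consecutive $|I_k|$ are comparable.)

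The main obstacle is Step 2, because bounding the intermediate gaps $|f^jx-f^jy|$ by the final gap needs distortion control first. Doing a crude pass and then a refined pass avoids the circularity. The summability $\sum r_k^{2\gamma}<\infty$ from Step 1 is what makes both passes uniform in $n$.
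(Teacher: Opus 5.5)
Your proof is correct. The paper gives no argument for this proposition of its own; it imports it as Lemma 5 of \S6.2 of Young's 1999 paper. Your proposal therefore supplies from scratch what the paper treats as a black box, by the standard bounded-distortion argument.

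The steps all check out:
\begin{itemize}
\item Integrating Condition \ref{condlast} from $0$, using $f(0)=0$ and $f'(0)=1$, gives $f(x)=x+c(x)x^{1+\gamma}$. Here $c$ is pinched between positive constants on all of $(0,r_1)$, not just near $0$.
\item Hence $|I_k|\asymp r_k^{1+\gamma}$ and $r_{k+1}\asymp r_k$. It follows that $\sup_{I_k}|f''|\,|I_k|\lesssim r_k^{2\gamma}$, which is summable because $r_k\asymp k^{-1/\gamma}$.
\item The crude pass gives distortion of $f^{m-j}\colon I_{n-j}\to I_{n-m}$ bounded uniformly in $n,m,j$.
\item The mean value theorem converts this into $|f^jx-f^jy|\le D\,|I_{n-j}|\,|f^mx-f^my|/|I_{n-m}|$.
\item The refined pass then yields the estimate with $C_1=D\sum_k\sup_{I_k}|f''|\,|I_k|$, enlarged if necessary to exceed $1$.
\end{itemize}

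The edge cases are also handled. For $m=n$ only indices $n-j\ge1$ enter the chain-rule sum, so only the left branch is used. For $\gamma<1$ you correctly invoke $r_{k+1}\asymp r_k$ to control $\sup_{I_k}x^{\gamma-1}$. What your version buys over the citation is an explicit check that Conditions \ref{cond2}, \ref{cond3} and \ref{condlast} of this paper suffice.

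One small correction: your closing parenthetical about an index mismatch is unfounded. With $I_k=(r_{k+1},r_k)$ one has $|I_{n-m}|=r_{n-m}-r_{n-m+1}$ exactly, which is precisely the denominator in the statement, so no re-indexing or comparability of consecutive $|I_k|$ is needed. You should also define $I_0=(r_1,1)$ explicitly, since you use it when $m=n$ but only defined $I_k$ for $k\ge1$.
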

	
	\begin{prop}[Game transfer from $F$ to $f$]\label{prop:gametransferFtof}
		With $C_1>0$ as in Proposition \ref{prop:youngdist}, for every $\alpha \in \bigl( 0, \frac12 \bigr]$:
		\begin{enumerate}[label=(\roman*)]
			\item\label{claim1} If $\mathcal{E}_F(r_1)$ is $\alpha$-strong winning, then $\mathcal E_f(0)$ is $e^{-C_1} \alpha$-strong winning;			
			\item\label{claim2} If $p \notin \bigcup_{n=0}^\infty f^{-n}(0)$ and $\mathcal{E}_F\left( f^{\tau(p)}p \right)$ is $\alpha$-strong winning, then $\mathcal E_f(p)$ is $e^{-C_1} \alpha$-strong winning.
		\end{enumerate}
		
	\end{prop}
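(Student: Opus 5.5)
Alice's strategy for $\mathcal E_f(p)$ on $[0,1]$ will be built by pulling back her $\alpha$-strategy for the appropriate $F$-exceptional set along finitely many branches of $f$. First I reduce both claims to a statement about orbits of $F$.

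\textbf{Reduction to $F$.} For (i), note that an $f$-orbit accumulates at $0$ exactly when it makes arbitrarily long excursions into $[0,r_1)$. Write $F$-orbit points as $y_k=F^k(f^{\tau(x)}x)$. Such long excursions occur exactly when $y_k$ lands in branches $J_i^F$ with $i$ unbounded, that is, when $y_k$ accumulates at $r_1$ (or at the relevant preimage). Hence $x\in\mathcal E_f(0)$ as soon as the induced orbit of $f^{\tau(x)}x$ lies in $\mathcal E_F(r_1)$, up to a countable set of preimages of $0$.

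For (ii), put $q=f^{\tau(p)}p$. Since $p$ is not a preimage of $0$, $p$ lies in the interior of some cylinder $(r_{n+1},r_n)$ or of a branch in $[r_1,1]$. Then $f^{\tau(p)}$ is a diffeomorphism of a neighbourhood $U$ of $p$ onto a neighbourhood of $q$. Suppose an $f$-orbit point $f^jx$ lies near $p$. Then the next return of that orbit to $[r_1,1]$ lies near $q$, and this return is an $F$-orbit point. I also have to handle the case where $f^jx$ is close to $p$ but sits on the other side of a discontinuity. This cannot happen for $p$ interior to a cylinder, because cylinder endpoints are preimages of $0$ or of $r_1$. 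So if the $F$-orbit avoids a neighbourhood of $q$, the $f$-orbit avoids a neighbourhood of $p$, and
\[
\mathcal E_f(p)\supset\{x: \text{the }F\text{-orbit of } f^{\tau(x)}x \text{ lies in } \mathcal E_F(q)\}
\]
minus a countable set.

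\textbf{Game transfer.} It therefore suffices to show the following: if $E\subset[r_1,1]$ is $\alpha$-strong winning for $F$, then $\{x\in[0,1]: f^{\tau(x)}x\in E\}$ is $e^{-C_1}\alpha$-strong winning on $[0,1]$. Consider the intervals $(r_{n+1},r_n)$ together with their $f$-images. Each cylinder is mapped by $f^{\tau}$ diffeomorphically onto $[r_1,1]$ or onto a subinterval of it. Proposition~\ref{prop:youngdist} gives, for $m=n$, a distortion bound of $e^{C_1}$ on each such cylinder, because $|f^nx-f^ny|\le r_0-r_1$. So ratios of lengths of nested subintervals are preserved up to the factor $e^{\pm C_1}$.

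Alice plays as follows. While Bob's interval $B_k$ meets many cylinders, she plays a dummy move. Once $B_k$ is small, she uses the strong-game structure: $|A_k|\ge\alpha|B_k|$ and she may choose $A_k$ anywhere inside $B_k$. This lets her place $A_k$ inside a single cylinder, away from the countably many preimages of $0$. After that, she maps Bob's balls forward by $f^{\tau}$ and plays her $F$-strategy on the images. The images satisfy the ratio constraints with $\alpha$ replaced by $e^{-C_1}\alpha$ and $\beta$ replaced by $e^{-C_1}\beta$. Since her $F$-strategy wins for every $\beta$, she then pulls her responses back.

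\textbf{Main obstacle.} I expect the main difficulty to be the step of forcing $A_k$ into a single cylinder, uniformly in $\beta$. Near $0$ the cylinders shrink, so Bob can play intervals straddling many of them. Near a cylinder endpoint Alice also cannot avoid the boundary with a ratio $\alpha\le 1/2$ if Bob centres his interval there. My approach is to have Alice respond to such straddles by choosing $A_k$ as the half of $B_k$ containing the larger piece of some cylinder. Comparable adjacent cylinder lengths, $(r_n-r_{n+1})/(r_{n+1}-r_{n+2})\to1$, should ensure she eventually lands in a cylinder, or converges to an endpoint. If she converges to an endpoint, that endpoint is a preimage of $0$ and is discarded, which is allowed by cocountability. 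I also need the dummy phase to last only finitely long, which holds because $|B_k|\to0$.
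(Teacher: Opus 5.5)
\textbf{There is a genuine gap in the localization step.} Your pullback-and-distortion scheme and your orbit argument for (ii) are the right ingredients. But the step you flag as the main obstacle, getting Bob's interval into a single cylinder, is not established, and the route you propose for it fails.

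\emph{The premise is false.} In the main game Alice's ratio is $e^{-C_1}\alpha<\tfrac12$. So for any prescribed point $c$ she can take $A_k$ of length $e^{-C_1}\alpha\lvert B_k\rvert$ inside the longer component of $B_k\setminus\{c\}$, which has length at least $\lvert B_k\rvert/2$. This excludes $c$ in one move, even when Bob centres $B_k$ at $c$.

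\emph{The ``larger half'' rule does not work.} It is ambiguous exactly when $B_k$ is centred at an endpoint, and it does not prevent plays from shrinking onto an endpoint.

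\emph{The fallback is invalid.} Cocountability says that deleting countably many points from a winning set leaves it winning; it is proved precisely by having Alice dodge those points. It does not let you discard plays whose outcome is a bad point. For (i), an outcome at $0$, or at some $r_n$ with $n\ge1$ (a preimage of $0$), is simply a loss. Moreover $1=r_0$ is not a preimage of $0$ at all.

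\emph{Two further claims are wrong.} $\lvert B_k\rvert\to0$ alone does not place $B_k$ in one cylinder, since the intervals can shrink onto an endpoint. And you cannot choose $A_k$ ``away from the countably many preimages of $0$'', since these are dense.

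\textbf{The missing idea is elementary.} On her first move Alice excludes $0$. Bob's next interval then has positive distance from $0$. Since the $r_n$ accumulate only at $0$, it contains only finitely many cylinder endpoints (possibly including $1$). Alice excludes these one per move. After finitely many moves Bob's interval lies in a single open cylinder $(r_{N+1},r_N)$, and the transfer starts there.

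\textbf{A second issue: use the first-entry map, not $f^{\tau}$.} The paper pushes forward by $f^N$, with $N=0$ (the identity) on $(r_1,1)$. If you push intervals in $(r_1,1)$ forward by $f^{\tau}=F$, you must also localize to a single $F$-branch, and these branches accumulate at $r_1$. You also pick up the distortion of the right branch of $f$, which Proposition~\ref{prop:youngdist} does not control, so you would not get the stated constant $e^{-C_1}\alpha$.

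With these two corrections your argument coincides with the paper's proof of (ii). The same scheme with target $\mathcal E_F(r_1)$ also gives (i), which the paper simply quotes from the author's earlier work.
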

	
	\begin{proof}
		Statement \ref{claim1} is Theorem 2.1 of \cite{DuvallMP}. For Statement \ref{claim2}, fix $p \notin \bigcup_{n=0}^\infty f^{-n}(0)$ and suppose that $\mathcal{E}_F\left( f^{\tau(p)}p \right)$ is $\alpha_F$-strong winning ($0<\alpha_F\leq\frac12$). Define
		\begin{equation*}
			\alpha_f \coloneqq e^{-C_1} \alpha_F \in \bigl(0,\tfrac12\bigr)
		\end{equation*}
		and let $\beta_f\in(0,1)$ be arbitrary. Let
		\begin{equation*}
			\beta_F\coloneqq e^{-C_1}\beta_f \in (0,1).
		\end{equation*}
		
		We play two strong Schmidt games. Alice and Bob play the primary $(\alpha_f,\beta_f)$-strong game on $[0,1]$ with target set $\mathcal{E}_f(p)$, while Alicia and Bobby play an auxiliary $(\alpha_F,\beta_F)$-strong game on $[r_1,1]$ with target set $\mathcal{E}_F \big( f^{\tau(p)}p\big)$.
		
		Bob begins the main game by choosing a closed interval $B_1\subset[0,1]$. Alice chooses any permissible $A_1 \subset B_1$ not containing 0. Bob then chooses $B_2\subset A_1$. Alice plays arbitrarily until Bob chooses an interval contained in the interior $(r_{N+1},r_N)$ of a return branch. This occurs after finitely many turns: once $0$ has been excluded, only finitely many return branches meet Bob's current interval, and Alice can force the lengths of Bob's intervals to decrease to zero while avoiding the finitely many branch endpoints since $\alpha_f < \frac12$. After relabeling we may assume without loss of generality that
		\begin{equation*}
			B_1\subset (r_{N+1},r_N)
		\end{equation*}
		for some integer $N \geq 0$. On this branch, $f^N$ maps $(r_{N+1},r_N)$ diffeomorphically onto $(r_1,1)$ (where $F^0$ is understood to mean the identity map).
		
		The auxiliary game begins when Bobby chooses the compact interval
		\begin{equation*}
			B_1'\coloneqq f^N(B_1)\subset (r_1,1).
		\end{equation*}
		Alicia then chooses $A_1'\subset B_1'$ according to a winning strategy. Alice now chooses the compact interval
		\begin{equation*}
			A_1\coloneqq (f^N)^{-1}(A_1')\cap (r_{N+1},r_N) \subset B_1.
		\end{equation*}
		This is a legal move because, using the Mean Value Theorem and Proposition \ref{prop:youngdist},
		\begin{equation}\label{eq:A1legal}
			\frac{\lvert A_1\rvert}{\lvert B_1\rvert}
			\geq
			e^{-C_1}\frac{\lvert A_1'\rvert}{\lvert B_1'\rvert}
			\geq
			e^{-C_1}\alpha_F
			=
			\alpha_f.
		\end{equation}
		Suppose that for some $k\geq1$ the players have chosen their intervals $\{ B_i,A_i,B_i',A_i' \}_{i=1}^k$ such that
		\begin{equation*}
			B_k'=f^N(B_k)
			\qquad\text{and}\qquad
			A_k=(f^N)^{-1}(A_k')\cap (r_{N+1},r_N),
		\end{equation*}
		and where Alicia has chosen her intervals $A_i'$ according to a winning strategy. Bob chooses $B_{k+1}\subset A_k$. Bobby chooses the compact interval
		\begin{equation*}
			B_{k+1}'\coloneqq f^N(B_{k+1})\subset f^N(A_k) \subset A_k';
		\end{equation*}
		as before this is a legal move because, again using the Mean Value Theorem and Proposition \ref{prop:youngdist},
		\begin{equation*}
			\frac{\lvert B_{k+1}'\rvert}{\lvert A_k'\rvert}
			\geq
			e^{-C_1}\frac{\lvert B_{k+1}\rvert}{\lvert A_k\rvert}
			\geq
			e^{-C_1}\beta_f
			=
			\beta_F.
		\end{equation*}
		Alicia chooses $A_{k+1}'\subset B_{k+1}'$ according to a winning strategy. Alice chooses
		\begin{equation*}
			A_{k+1}\coloneqq (f^N)^{-1}(A_{k+1}')\cap (r_{N+1},r_N);
		\end{equation*}
		the obvious modification to \eqref{eq:A1legal} proves that this is a legal move.
		
		Both games are complete, and by construction Alicia wins the auxiliary game. Let
		\begin{equation*}
			q\coloneqq f^{\tau(p)}(p).
		\end{equation*}
		Since Alicia wins, there exists
		\begin{equation*}
			\omega'\in \bigcap_{k=1}^{\infty}B_k'\cap \mathcal E_F(q).
		\end{equation*}
		Define
		\begin{equation*}
			\omega\coloneqq (f^N)^{-1}\vert_{[r_{N+1},r_N]} (\omega').
		\end{equation*}
		Because \(B_k'=f^N(B_k)\) for every \(k\), it follows that
		\begin{equation*}
			\omega\in \bigcap_{k=1}^{\infty}B_k.
		\end{equation*}
		It remains to show that \(\omega\in\mathcal E_f(p)\).
		
		Since \(\omega'\in \mathcal E_F(q)\), the \(F\)-orbit of \(\omega'\) avoids some relative open neighborhood \(V\) of \(q\) in \([r_1,1]\). We claim that the \(f\)-orbit of \(\omega\) avoids a relative neighborhood of \(p\).
		
		First note that the finite orbit segment
		\begin{equation*}
			p,\ f(p),\ldots, f^{\tau(p)-1}(p)
		\end{equation*}
		does not contain the branch point \(r_1\) since $p$ is not a preimage of 0. Hence there exists a relative open neighborhood \(U_0\) of \(p\) such that, for every \(0\leq i<\tau(p)\), the interval \(f^i(U_0)\) remains in the same monotonicity branch of \(f\) as \(f^i(p)\). Consequently \(f^{\tau(p)}\) is continuous on \(U_0\). Thus find a relative open neighborhood \(U\subset U_0\) of \(p\) such that
		\begin{equation*}
			f^{\tau(p)}(U)\subset V.
		\end{equation*}
		
		Suppose, toward a contradiction, that the \(f\)-orbit of \(\omega\) enters \(U\). Then there exists \(t\geq0\) such that
		\begin{equation*}
			f^t(\omega)\in U.
		\end{equation*}
		Therefore
		\begin{equation*}
			f^{t+\tau(p)}(\omega)\in V\subset [r_1,1].
		\end{equation*}
		Since \(\omega\in (r_{N+1},r_N)\), its first return to \([r_1,1]\) occurs at time \(N\), and
		\begin{equation*}
			f^N(\omega)=\omega'.
		\end{equation*}
		Because \(f^{t+\tau(p)}(\omega)\in [r_1,1]\), necessarily \(t+\tau(p)\geq N\). Moreover, after the first return at time \(N\), every later visit of the \(f\)-orbit of \(\omega\) to \([r_1,1]\) is obtained by iterating the first return map \(F\). Hence there exists \(j\geq0\) such that
		\begin{equation*}
			f^{t+\tau(p)}(\omega)=F^j(\omega').
		\end{equation*}
		This contradicts the choice of \(V\), since the \(F\)-orbit of \(\omega'\) avoids \(V\). Thus the \(f\)-orbit of \(\omega\) avoids \(U\), and so
		\begin{equation*}
			\omega\in \mathcal E_f(p).
		\end{equation*}
		
		This shows that
		\begin{equation*}
			\bigcap_{k=1}^{\infty}B_k\cap \mathcal E_f(p)\neq\emptyset.
		\end{equation*}
		and therefore Alice wins the main \((\alpha_f,\beta_f)\)-strong game. Since \(\beta_f\in(0,1)\) was arbitrary, \(\mathcal E_f(p)\) is \(e^{-C_1}\alpha_F\)-strong winning.
	\end{proof}
	
	Now we address the case when the point to be avoided is a preimage of 0.
	
	\begin{lem}[Preimages of the neutral fixed point]\label{lem:preimagesofzero}
		For every
		\begin{equation*}
			p\in\bigcup_{n=0}^{\infty}f^{-n}(0),
		\end{equation*}
		we have
		\begin{equation*}
			\mathcal E_f(0)\cap\mathcal E_f(1)\subset \mathcal E_f(p).
		\end{equation*}
	\end{lem}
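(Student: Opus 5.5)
The plan is a direct contrapositive argument that only uses the endpoint conventions $f(0)=f(r_1)=0$, $f(1)=1$ and continuity of $f$ on each monotonicity branch. Fix $p\in f^{-n}(0)$ with $n$ minimal, and let $x\in\mathcal E_f(0)\cap\mathcal E_f(1)$. Suppose, toward a contradiction, that $p$ lies in the orbit closure of $x$. We want to push this accumulation forward $n$ steps to conclude that $0$ or $1$ lies in the orbit closure, which contradicts the assumption on $x$.

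\emph{Step 1: the orbit of $p$.} Write $p_i=f^i(p)$ for $0\le i\le n$, so $p_n=0$. If $n=0$ then $p=0$ and there is nothing to prove. Otherwise each $p_i$ with $i<n$ is nonzero, and $p_{n-1}\in\{r_1\}$ or $p_{n-1}=0$; by minimality $p_{n-1}=r_1$. More generally, every point of the orbit segment $p_0,\dots,p_{n-1}$ is either interior to a branch or equal to $r_1$, or is the endpoint $1$. The only discontinuity of $f$ is at $r_1$, where the left limit is $1$ and the right value is $0$.

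\emph{Step 2: one-sided limits through $r_1$.} Take $f^{t_k}x\to p$. The claim is that along a subsequence, $f^{t_k+j}x\to p_j$ or $f^{t_k+j}x\to 1$ for some $j\le n$; either way we are done. The first case gives $0$ once $j=n$, and the second gives $1$ directly. To see this, iterate forward. As long as $p_j\neq r_1$, continuity of $f$ at $p_j$ (or one-sided continuity at the endpoints $0,1$, using $f(1)=1$) gives $f^{t_k+j+1}x\to p_{j+1}$. At the first index with $p_j=r_1$, pass to a subsequence approaching $r_1$ from one side. From the right, or at $r_1$ exactly, $f$ is continuous from the right with $f(r_1)=0$, so $f^{t_k+j+1}x\to 0$ and $0$ lies in the orbit closure. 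From the left, $f^{t_k+j+1}x\to 1$, so $1$ lies in the orbit closure. Both outcomes contradict $x\in\mathcal E_f(0)\cap\mathcal E_f(1)$.

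\emph{Main obstacle.} The main obstacle is bookkeeping the endpoint conventions of Remark~\ref{rem:endpointrelations}. One must check that $r_1$ is the only discontinuity, that the left limit there is $1$, and that points of the segment equal to $1$ stay at $1$ (so they cannot be preimages of $0$ unless they pass through $r_1$). One must also handle the case where the $f^{t_k}x$ equal $p$ exactly, which is covered by the same continuity argument. I expect the whole proof to be a short paragraph once this case split at $r_1$ is written out.
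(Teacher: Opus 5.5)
Your proof is correct and follows the same route as the paper. Both push the accumulation at $p$ forward $n$ steps, split at the discontinuity $r_1$ according to the side from which the orbit approaches, and conclude that the orbit accumulates at $0$ or at $1$. Two of your choices make the bookkeeping slightly cleaner than the paper's appeal to one-sided limits of $f^N$ with $n_j\to\infty$: taking $n$ minimal means the only discontinuity you meet is at $p_{n-1}=r_1$, and you explicitly handle the case $f^{t}x=p$.
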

	
	\begin{proof}
		Suppose $x\notin\mathcal E_f(p)$. Then there are integers $n_j\to\infty$ such that $f^{n_j}x\to p$. Choose $N\geq0$ with $f^N(p)=0$.
		
		The map $f^N$ is continuous on each component of the complement of the finite set
		\begin{equation*}
			\bigcup_{i=0}^{N-1} f^{-i}(r_1).
		\end{equation*}
		Since $f(0)=0$, $f(r_1)=0$, the left branch tends to $1$ at $r_1$, and $f(1)=1$, every one-sided limit of $f^N(y)$ as $y\to p$ is either $0$ or $1$. Passing to a subsequence of $f^{n_j}x$ lying on one side of each relevant discontinuity of $f^N$, we obtain either
		\begin{equation*}
			f^{n_j+N}x\to0
		\end{equation*}
		or
		\begin{equation*}
			f^{n_j+N}x\to1.
		\end{equation*}
		Thus $x\notin\mathcal E_f(0)$ or $x\notin\mathcal E_f(1)$.
	\end{proof}
	
	Our main theorem is now a simple consequence of Proposition \ref{prop:gametransferFtof} and Lemma \ref{lem:preimagesofzero}.
	
	\begin{proof}[Proof of Theorem \ref{thm:maintheorem}]
		The exceptional set $\mathcal E_f(0)$ is $\alpha_0$-strong winning for some $\alpha_0\in\bigl(0,\frac12\bigr)$ by Theorem 1.2 of \cite{DuvallMP}. For $p \in (0,1]$ not a preimage of 0, Theorem \ref{thm:Fwinning} and Proposition \ref{prop:gametransferFtof} imply that $\mathcal E_f(p)$ is $\alpha_1$-strong winning for some $\alpha_1\in\bigl(0,\frac12\bigr)$. Also, 1 is not a preimage of 0, so in particular $\mathcal E_f(1)$ is $\alpha_1$-strong winning. Finally, for $p\in\bigcup_{n=0}^{\infty}f^{-n}(0)$, Lemma \ref{lem:preimagesofzero} now implies that $\mathcal E_f(p)$ is $\min\{\alpha_0,\alpha_1\}$-strong winning. Thus Theorem \ref{thm:maintheorem} follows with
		\begin{equation*}
			\alpha\coloneqq \min\{\alpha_0,\alpha_1\}.\qedhere
		\end{equation*}
	\end{proof}

\end{document}